\theoremstyle{plain}
\newtheorem{theorem}{Theorem}[section]
\newtheorem{proposition}[theorem]{Proposition}
\theoremstyle{definition}
\newtheorem{example}{Example}
\theoremstyle{remark}
\newtheorem{remark}{Remark}
\begin{document}

\title[Gutkin Billiards]
      {Gutkin billiard tables in higher dimensions and rigidity}

\date{September 2017}
\author{Misha Bialy }
\address{M. Bialy, School of Mathematical Sciences, Tel Aviv
University, Israel} \email{bialy@post.tau.ac.il}

\thanks{M.B. was supported in part by the Israel Science Foundation grant
162/15}

\subjclass[2010]{} \keywords{{Birkhoff billiards, Geodesics, Bodies of constant width }}

\begin{abstract} E. Gutkin found a remarkable class of convex billiard tables in the plane
    which have a constant angle invariant curve. In this paper we prove that in dimension 3
    only round sphere has such a property. 
    For dimension greater than $3$ it must be either a sphere or
    to have a very special geometric properties. 
    In 2-dimensional case we prove a
    rigidity result for Gutkin billiard tables.  This is done with the help of a new generating function introduced recently for billiards in our joint paper with A.E. Mironov. A formula for this generating function in higher dimensions is found.
\end{abstract}

\maketitle



\section{\bf Introduction and main results}
Consider a convex compact domain in Euclidean space $\mathbf{R}^{d}$
bounded by a smooth hypersurface $S$ with positive principal curvatures everywhere.
We shall call $S$ a Gutkin billiard table if
there exists $\delta\in (0;\pi/2)$ such that for any pair of points $p,q \in S $  the following condition is satisfied: if the angle between the vector $\overrightarrow {pq}$ with the tangent hyperplane to $S$ at $p$ equals $\delta$, then the angle between $\overrightarrow {pq}$
and the tangent hyperplane at $q$ also equals $\delta$.

Notice that the case $\delta=\pi/2$ is classical and corresponds to
bodies of constant width.  Planar billiard tables
with this property were found and studied in details by Eugene Gutkin 
\cite{gu1},\cite{gu2} (see also \cite {T2}). He discovered that planar domains with this
property, which are different from round discs, can exist only for
those values of $\delta$ which satisfy for some integer $n>3$ the
equation
\begin {equation}\label{G}
\tan(n\delta)=n\tan \delta.
\end{equation}
Moreover, the shape of these domains is also very special. 
Namely, let $\rho(\phi)$ be the curvature radius as a function of the tangent angle $\phi$. Then the Fourier coefficients $c_k$ of $\rho$ all vanish for $k$ different from $n$ in (\ref{G}).
 For example, if $\delta$ satisfies (\ref{G}) for $n=5$ then
 function $\rho (\phi)=a_0 + a_5\cos5\phi$ gives an example of Gutkin billiard table. Notice that in this example the domain is also of constant width, so for billiard ball map there are two constant angle invariant curves: one for the angle $\delta$ and another for the angle $\pi/2$.

It turns out that the property of equal angles becomes very rigid in higher dimensions:
\begin{theorem}
    The only Gutkin billiard tables in $\mathbf{R}^{3}$ are round spheres.
\end{theorem}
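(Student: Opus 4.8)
The plan is to show that the Gutkin condition in $\mathbf{R}^3$ forces the surface to have constant curvature everywhere, which by a standard rigidity result (Liebmann's theorem) means $S$ must be a round sphere. The key geometric insight is that the equal-angle condition is a strong reflection-symmetry-type constraint: for every chord $pq$ meeting the tangent plane at $p$ at angle $\delta$, the same angle $\delta$ is made at $q$. I would first set up the local differential-geometric data: at a point $p\in S$ with unit normal $\nu(p)$, the cone of directions making angle $\delta$ with the tangent plane is a circle of directions; the Gutkin condition says this "angle-$\delta$ billiard correspondence" is a well-defined involution on $S$, matching each such chord symmetrically.

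My next step would be to extract the consequences of the condition in the short-chord (infinitesimal) limit. **First I would** consider a point $p$ and let $q\to p$ along the surface: the chord direction $\overrightarrow{pq}$ tends to a tangent direction, and the angle condition, expanded to leading orders in the chord length, should relate the second fundamental forms (shape operators) at $p$ and $q$. Concretely, for a chord making angle $\delta$ with $T_pS$, writing the angle at $q$ as $\delta$ as well and Taylor-expanding both sides, the first-order terms should force a relation between the normal curvatures in the relevant tangent directions at $p$ and $q$. In the planar case this is exactly what produces Gutkin's equation $\tan(n\delta)=n\tan\delta$ and the Fourier restriction on $\rho$; in dimension $3$ the extra transverse direction gives additional equations that the planar case does not see, and I expect these to be overdetermined.

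**The heart of the argument**, and the step I expect to be the main obstacle, is exploiting this overdetermination. In $\mathbf{R}^3$ the tangent plane is $2$-dimensional, so at each point the angle-$\delta$ chords sweep out a full circle of tangent directions rather than just two as in the plane. The Gutkin matching must hold simultaneously for all of them, and I would try to integrate the angle condition against this circle of directions (or differentiate in the azimuthal angle around the normal) to decouple the shape-operator relations. I anticipate that averaging over the circle kills the anisotropic part of the second fundamental form and leaves a relation forcing the two principal curvatures to coincide — i.e.\ every point is umbilic — and moreover that the common value is constant along $S$. **The technical difficulty** will be controlling the higher-order terms in the chord-length expansion and ensuring the azimuthal integration genuinely separates the isotropic (umbilic) from the anisotropic contributions, rather than merely constraining their combination; handling the case $\delta$ satisfying a higher-dimensional analogue of (\ref{G}) must be ruled out here, which is precisely where the rigidity in $\mathbf{R}^3$ (as opposed to the richer behavior announced for $d>3$) comes from.

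**Finally**, once $S$ is shown to be totally umbilic with constant principal curvature, I would invoke the classical fact that a closed totally umbilic hypersurface in Euclidean space is a round sphere to conclude. It would remain to verify that the round sphere does satisfy the Gutkin property for the given $\delta$ (which is immediate from rotational symmetry), closing the characterization.
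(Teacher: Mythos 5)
Your proposal has a genuine gap at its core step, the ``short-chord (infinitesimal) limit.'' For a fixed $\delta\in(0,\pi/2)$ the Gutkin condition only constrains chords meeting the tangent plane at the \emph{positive} angle $\delta$, and such chords have length bounded away from zero: as $q\to p$ along $S$ the chord direction $\overrightarrow{pq}$ becomes tangent, so the angle at $p$ tends to $0$, not $\delta$. There is therefore no family of arbitrarily short angle-$\delta$ chords to Taylor-expand, and no first-order relation between the shape operators at $p$ and at nearby $q$ can be extracted from the hypothesis; the condition genuinely relates second fundamental forms at points a macroscopic distance apart. Your parenthetical claim that such an expansion is ``exactly what produces'' Gutkin's equation $\tan(n\delta)=n\tan\delta$ in the plane is also inaccurate: that equation comes from a global Fourier-analytic argument along the invariant curve of constant angle, not from an infinitesimal limit. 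Consequently the subsequent azimuthal averaging, which you correctly flag as the technical obstacle, never gets off the ground, because the quantities to be averaged are not defined.

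The paper overcomes exactly this non-locality by a symplectic reformulation: the Gutkin condition is equivalent to invariance under the billiard map of the hypersurface $\Sigma_\delta=\{(p,v): v\in T_pS,\ |v|=\cos\delta\}$ in the space of oriented lines, so characteristics of $\Sigma_\delta$ (geodesics of $S$ carrying tangent vectors of length $\cos\delta$) are mapped to characteristics. Writing Frenet-type equations along a geodesic $\gamma$ and its image geodesic $\Gamma$, the two conditions of Theorem 2.2 yield a linear first-order ODE $l\sin\delta\,(kl-\sin\delta)\,\dot\tau+B(s)\tau=0$ for the deviation $\tau$ from planarity, whose coefficient is shown not to degenerate when $\tau$ vanishes; hence a geodesic starting in a principal direction is planar and remains principal. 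In $\mathbf{R}^3$ a non-umbilic point would then carry two orthogonal geodesic foliations by curvature lines, forcing the induced metric $E(x)\,dx^2+G(y)\,dy^2$ to be flat, contradicting positive curvature; so every point is umbilic and $S$ is a round sphere. Note the paper needs no Liebmann-type theorem and no control of higher-order expansions; the rigidity enters through the uniqueness theorem for the ODE on $\tau$. If you want to salvage your plan, the missing idea you must supply is precisely some mechanism (like the invariance of $\Sigma_\delta$ and its characteristics) that converts the global equal-angle condition into local differential data.
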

Let us formulate now the result for the case $d>3$. We have the following alternative:
\begin{theorem}Let $S$ be a Gutkin billiard table in $\mathbf{R}^{d}$,  $d>3$, corresponding to an angle $\delta\in (0;\pi/2)$.

1.  If $\delta$ is not a solution of equation (\ref{G}) for odd $n$,
then $S$ is a round sphere.

2. If $\delta$ is a solution of equation (\ref{G}) for some odd $n$, then $S$ is necessarily
a body of constant width. Moreover, every geodesic curve on $S$ which is tangent to a principal direction at some point of $S$ lies in a 2-plane and defines on this plane 2-dimensional Gutkin billiard table.
    \end{theorem}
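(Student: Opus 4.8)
The plan is to encode $S$ by its support function $h$ on the unit sphere $S^{d-1}\subset\mathbf{R}^{d}$, so that the boundary point with outer unit normal $u$ is $X(u)=h(u)\,u+\nabla h(u)$, where $\nabla$ is the spherical gradient. For two normals $u,w$ I record the chord $X(w)-X(u)$ of length $L=\abs{X(w)-X(u)}$. The two equal–angle requirements read $\langle X(w)-X(u),u\rangle=-L\sin\delta$ and $\langle X(w)-X(u),w\rangle=+L\sin\delta$; their sum
$$\langle X(w)-X(u),\,u+w\rangle=0$$
is linear in $h$ and free of $L$, whereas the first requirement alone cuts out the admissible constraint surface, which is nonlinear in $h$. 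Thus the Gutkin property is equivalent to the vanishing of this single linear expression — the quantity controlled by the generating function found earlier in higher dimensions — on the admissible surface, and I would first record this as one functional identity for $h$.

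The engine of the argument is a reduction to Gutkin's planar theory through the following bridge lemma: if a $2$–plane $\Pi$ meets $S$ in a curve $\gamma=S\cap\Pi$ along which the outer normal of $S$ always lies in $\Pi$, then $\gamma$ is a planar Gutkin table in $\Pi$ for the same angle $\delta$. The content is local: when $n_p\in\Pi$ and the chord lies in $\Pi$, the angle of the chord with the tangent hyperplane $T_pS$ equals its angle with the tangent line $T_p\gamma=\Pi\cap T_pS$, so the $d$–dimensional condition restricts verbatim. The hypothesis ``$n_p\in\Pi$ along $\gamma$'' says exactly that $\gamma$ is a planar geodesic of $S$, equivalently a geodesic which is also a line of curvature. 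For such a $\gamma$ Gutkin's classification gives curvature radius $\rho=a_{0}+a_{n}\cos n\varphi$ with $\tan(n\delta)=n\tan\delta$.

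The dichotomy comes from the parity of the frequency $n$. A planar table $\rho=a_{0}+a_{n}\cos n\varphi$ is of constant width exactly when $n$ is odd, the even Fourier modes of $\rho$ that obstruct $\rho(\varphi)+\rho(\varphi+\pi)=\mathrm{const}$ being then absent. The dimension hypothesis $d>3$ enters precisely here: I would show that an even frequency cannot occur on a planar geodesic section of a $d$–dimensional Gutkin body with $d>3$. Granting this, if $\delta$ is not an odd solution of (\ref{G}) then every admissible section has $\rho\equiv\mathrm{const}$, is a circle, and $S$ is a round sphere; while if $\delta$ is an odd solution, all sections are odd–frequency curves of constant width and the even part $\tfrac12\bigl(h(u)+h(-u)\bigr)$ of the support function is forced to be constant, i.e. $S$ itself has constant width.

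The remaining and, I expect, hardest step is to manufacture the geodesics demanded by the lemma and to recognize them as those tangent to a principal direction. A geodesic that is also a line of curvature is automatically planar, its Frenet torsion being equal to its vanishing geodesic torsion; and along a principal direction the geodesic torsion starts at zero. The obstacle is to show that the Gutkin condition forces a geodesic tangent to a principal direction at one point to \emph{remain} a line of curvature, i.e. that the principal foliation has geodesic leaves in these directions. I would attack this by analyzing the billiard ball map on the constant–angle invariant hypersurface: principal directions are exactly where the transverse variation of the equal–angle constraint degenerates, and the aim is to prove that the invariant structure transports this degeneracy along the orbit, confining the associated geodesic to a $2$–plane. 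The bridge lemma then equips each such planar geodesic with a genuine $2$–dimensional Gutkin table, which is the assertion of part 2.
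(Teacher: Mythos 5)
You have reproduced the paper's outer skeleton correctly — planar geodesic sections of $S$ are two-dimensional Gutkin tables for the same $\delta$ (your bridge lemma is exactly the paper's opening observation in the proof, and your verification of it is right), and the conclusion is extracted from Gutkin's planar classification. But the decisive step is missing, and you flag it yourself: the claim that a geodesic tangent to a principal direction at one point \emph{remains} a line of curvature, hence planar. Your proposed mechanism (``principal directions are where the transverse variation of the equal-angle constraint degenerates, and the invariant structure transports this degeneracy along the orbit'') is a hope, not an argument; nothing in it explains why the degeneracy propagates rather than being destroyed after one reflection. The paper proves this (Theorem \ref{planar}) by a concrete computation: the Gutkin condition is equivalent to invariance of the hypersurface $\Sigma_\delta=\{(p,v):\ p\in S,\ v\in T_pS,\ |v|=\cos\delta\}$ under the billiard map, so characteristics go to characteristics; a characteristic projects to a geodesic $\gamma$, its image to a second geodesic $\Gamma(s)=\gamma(s)+l(s)z(s)$, and expressing the two resulting conditions on $\Gamma$ in the generalized Frenet frame yields a first-order linear ODE
$$l\sin\delta\,(kl-\sin\delta)\,\dot{\tau}+B(s)\tau=0$$
for the coefficient $\tau$ of (\ref{f2}), together with the nondegeneracy statement (Proposition \ref{p1}) that $kl-\sin\delta$ and $\tau$ cannot vanish simultaneously; an open-closed argument then shows that $\tau(s_0)=0$ forces $\tau\equiv 0$. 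Tangency to a principal direction gives $\tau(0)=0$ by comparing the shape-operator identity with (\ref{f2}), so the geodesic is planar and principal throughout. Without some substitute for this ODE your bridge lemma has no geodesics to act on, and the whole reduction collapses.

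You also misplace where the hypotheses enter in the dichotomy, and this creates a second gap. You propose to show, \emph{using} $d>3$, that an even frequency $n$ cannot occur on a planar section, and only afterwards to deduce constant width in the odd case; both steps are left as assertions (``Granting this\dots''). The paper's order is the reverse and requires no new dimension-dependent fact: constant width of $S$ is proved first and unconditionally, by taking two orthogonal principal directions at $p$, noting that the two planar geodesics $\gamma_1,\gamma_2$ lie in $2$-planes $\sigma_1,\sigma_2$ whose intersection is the normal line $l_p$, so by convexity they meet again at a single common point $p'$ on $l_p$ at which $l_p$ is again normal to $S$ — the double normal property, equivalent to constant width by \cite{G-W} — and in particular the sections themselves have constant width. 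Even $n$ is then excluded because a curvature radius with only even harmonics cannot satisfy $\rho(\phi)+\rho(\phi+\pi)={\rm const}$ unless $\rho$ is constant, so in case 1 every section is a circle, all of common diameter $2R$, every point of $S$ is umbilic with curvature $1/R$, and $S$ is a sphere. Your alternative route — forcing the even part $\frac{1}{2}(h(u)+h(-u))$ of the support function to be constant from constant width of the sections — tacitly needs this same double-normal intersection argument, since a priori different sections could have different widths and nothing yet pairs antipodal normals of $S$ along a common chord. Note finally that $d>3$ plays no role in any of these steps; it appears in the statement only because for $d=3$ the flatness argument of Theorem 1.1 yields the stronger conclusion, so the even-frequency exclusion you planned to base on $d>3$ is looking for dimension dependence where the paper has none.
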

\begin{remark}
It is plausible that equation (\ref{G}) is in fact irrelevant in higher dimensions, i.e. $S$ must be a sphere also in the case 2. of Theorem 1.2. However we were not able to prove this. 
\end{remark}
The following discussion is important.
It was proved in \cite{ber}, \cite{b-g} that for higher dimensional billiards only ellipsoids have convex caustics. Less restrictive object of dynamical importance would be invariant hypersurface in the phase space. Obviously, if there is a convex caustic for convex billiard table then the set of oriented lines tangent to the caustic form an invariant hypersurface in the phase space of all oriented lines. There are no examples, however of invariant hypersurfaces besides ellipsoids. Gutkin billiards would provide an example of such hypersurface, but Theorem 1.1 and 1.2 tell us that they are very rare. It would be interesting to have an example
and study further properties of invariant hypersurfaces for higher dimensional Birkhoff billiards.

Our next result on Gutkin billiard tables deals with the question of
their integrability in the planar case. 

More precisely, we examine
the so called total integrability in a strip between two neighboring invariant curves which we now turn to
explain. The term of total integrability was suggested in
\cite{knauf} for geodesic flows. Consider a Gutkin billiard table $S$ corresponding to the angle $\delta$. Then $\delta$  is one of the solutions of  equation (\ref{G}), while (\ref{G}) has as many as  $\lfloor \frac{n}{2} \rfloor$ solutions in $[0, \frac{\pi}{2})$. Every solution $\delta_i$ corresponds a constant angle invariant curve on the phase cylinder.
Let me denote by $\Omega_{\delta_1\delta_2}$ the strip between two neighboring constant angle invariant curves on the cylinder.
We shall say that the billiard is totally integrable in the strip $\Omega_{\delta_1\delta_2}$
if the whole strip is foliated by rotational invariant curves.

\begin {theorem}
If Gutkin billiard table $S$ is totally integrable in the strip $\Omega_{\delta_1\delta_2}$ between two neighboring invariant curves then $S$ is a circle.

\end{theorem}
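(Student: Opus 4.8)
The plan is to work on the phase cylinder of the billiard ball map, where each point encodes a boundary point together with the direction of the outgoing chord, parametrized by arclength (or tangent angle $\phi$) and the angle $\theta$ that the chord makes with the tangent line. A Gutkin table with angle $\delta$ satisfying (\ref{G}) produces several constant-angle invariant curves $\{\theta=\delta_i\}$, and the hypothesis of total integrability means that the strip $\Omega_{\delta_1\delta_2}$ between two neighboring ones is foliated by rotational (i.e.\ non-contractible) invariant curves of the billiard map. I would exploit the generating function of the billiard map together with the fact that a continuous foliation by invariant curves forces the map to be, up to a change of coordinates, an integrable twist map on the strip. The key object is the interpolating Hamiltonian / invariant one-form: each rotational invariant curve is the graph $\theta=\Theta(\phi)$ of a smooth function, and invariance under the billiard map gives a functional equation relating $\Theta$ along a chord to $\Theta$ at the landing point.

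Concretely, I would first translate total integrability into a continuous family of closed $1$-forms (or a smooth invariant fibration) and derive, via the generating function $L$ introduced with Mironov, an identity that each leaf must satisfy. The two boundary leaves $\theta=\delta_1$ and $\theta=\delta_2$ are \emph{constant}, so the foliation degenerates to horizontal lines precisely at the boundary; by continuity the rotation number is constant along the foliation, and I would compute that the rotation numbers on the two boundary curves are rationally related through (\ref{G}). The heart of the argument is then to show that a smooth foliation of the whole strip by invariant curves, pinned to horizontal lines on both sides, is incompatible with the generating function of a non-circular table. I expect to extract from the invariance equation a pointwise relation, differentiate along the foliation, and arrive at a constraint on the curvature radius $\rho(\phi)$; combined with the Fourier characterization from Gutkin's planar theory (all Fourier modes of $\rho$ except the $n$-th vanish), this should force $\rho$ to be constant, i.e.\ $S$ a circle.

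The main obstacle, I anticipate, is the rigidity step that upgrades ``foliated by invariant curves'' to ``the table is a circle.'' Merely having two constant-angle curves is consistent with Gutkin's non-circular examples, so the extra content of total integrability must be used in an essential way: the fact that \emph{every} curve in the strip is invariant, not just the two endpoints. I would try to exploit this by a normal-form / action-angle argument—if the strip admits a continuous foliation by rotational invariant curves, the billiard map on the strip is conjugate to a rigid rotation in suitable coordinates, and one can then transport the two constant-angle boundary conditions through the conjugacy to deduce that the twist is degenerate. The delicate point is controlling regularity and the behavior of the foliation as it approaches the two boundary curves, where the invariant curves must match the horizontal lines; a Hopf-type or Mather-theoretic rigidity estimate, using that the boundary leaves carry fixed rotation numbers, is where I would expect the real work to lie.

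Once the degeneracy of the twist is established, the conclusion follows: the interpolating Hamiltonian becomes a function of the action alone with trivial shear, which forces $\rho(\phi)$ to have a single nonvanishing Fourier mode of frequency incompatible with the simultaneous existence of two neighboring invariant curves unless $\rho$ is constant. I would close by noting that constant $\rho$ means $S$ is a circle, completing the proof of the theorem.
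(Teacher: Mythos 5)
The decisive gap is that your proposal postpones the actual rigidity mechanism to an unspecified ``Hopf-type or Mather-theoretic estimate'' --- but that estimate \emph{is} the theorem, and the paper supplies it concretely. The paper works in the coordinates $(p,\phi)$ on oriented lines with the generating function $\mathrm{S}=2h\left(\frac{\phi_1+\phi_2}{2}\right)\sin\left(\frac{\phi_2-\phi_1}{2}\right)$, where $h$ is the supporting function. Total integrability of the strip yields, by the standard argument of \cite{B1}, \cite{B2}, the integral inequality $\int_{\Omega_{\delta_1\delta_2}}(\mathrm{S}_{11}+2\mathrm{S}_{12}+\mathrm{S}_{22})\,d\mu\leq 0$. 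Substituting the explicit second derivatives of $\mathrm{S}$ and the invariant measure $d\mu=\frac12\rho(\phi)\sin\alpha\,d\phi_1 d\phi_2$ (with $\phi=\frac{\phi_1+\phi_2}{2}$, $\alpha=\frac{\phi_2-\phi_1}{2}$), the integral is computed in closed form to equal $2\int_{\delta_1}^{\delta_2}\sin^2\alpha\,d\alpha\cdot\int_0^{2\pi}\left[(h''(\phi))^2-(h'(\phi))^2\right]d\phi$, which is nonnegative by Wirtinger's inequality applied to $h'$. Hence the integral vanishes, equality holds in Wirtinger, $h'$ lies in the span of the first harmonics, and $S$ is a circle. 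Nothing in your outline produces a substitute for this computation: the invariance functional equation, the interpolating Hamiltonian, and the transport of boundary conditions through a conjugacy are all left as intentions precisely at the point where the proof has to happen.

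Beyond incompleteness, two of your intermediate claims are false and would derail the plan if pursued. First, the rotation number is \emph{not} constant along the foliation: the paper verifies the twist condition globally, $\mathrm{S}_{12}=\frac12\left(h''(\phi)+h(\phi)\right)\sin\alpha=\frac12\rho(\phi)\sin\alpha>0$, so the rotation number increases strictly across the strip and the two boundary curves $\theta=\delta_1$, $\theta=\delta_2$ carry different rotation numbers; in particular the map on the strip is not conjugate to a single rigid rotation, and your goal of showing ``the twist is degenerate'' contradicts the strict twist of every convex table (integrability is perfectly compatible with twist --- the round disc is an integrable twist map). Second, your closing step --- that a single Fourier mode of $\rho$ is ``incompatible with the simultaneous existence of two neighboring invariant curves unless $\rho$ is constant'' --- is wrong as stated: Gutkin's non-circular tables with $\rho=a_0+a_n\cos n\phi$ possess \emph{all} the constant-angle invariant curves $\delta_i$ solving equation (\ref{G}) at once. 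What non-circular tables cannot have is a foliation of the whole strip between two such curves, and that obstruction is invisible to rotation-number or conjugacy bookkeeping; it is detected exactly by the integral inequality plus Wirtinger mechanism above.
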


Some discussion is in order. Total integrability, and more generally
Hopf type rigidity for billiards, was  found in \cite{B1} and by
another method in \cite{W} where the assumption was that rotational
invariant curves occupy all (or almost all) phase space.  In \cite{B2} a qualitative version of
Hopf rigidity is obtained. One would
like to relax the conditions of Hopf rigidity in some way. Theorem 1.3 gives a
rigidity result on total integrability for Gutkin billiards on a
strip between two invariant curves.

For the proof of Theorem 1.3 we use a new generating function for Convex billiards invented in \cite{BM}.
We also discuss the formula for this function for higher dimensional case. It turns out that for ellipsoids it coincides with one found by Yu. Suris in \cite{Su}.

In Section 2 we use symplectic nature of the problem and show a link of billiard ball map with geodesics on the surface. Then we prove their planarity.
In Section 3 we prove Theorem 1.1 and 1.2.  Section 4 contains the proof of Theorem 1.3.
In Section 5 we discuss generating function also in higher dimensional case.

\section*{Acknowledgments}
It is a pleasure to thank  Boris Khesin, Yuri Suris and Sergei Tabachnikov for stimulating discussions.
This research was supported in part by ISF grant 162/15.

\section{Higher dimensional Gutkin billiard tables}

\subsection{Symplectic properties}
Proof of Theorem 1.1 requires symplectic properties of billiards.
Consider Birkhoff billiard inside  hypersurface $S$. The phase space
$\Omega$ of the billiard consists of the set of oriented lines
intersecting $S$. The space of oriented lines in $\mathbf{R}^{d}$ is
isomorphic to $T^*\mathbf{S}^{d-1}$ and hence carries natural
symplectic structure. Birkhoff billiard map acts on the space of
oriented lines and preserves this structure. Another way to describe
the same symplectic structure is the following. Every oriented line $l$
intersecting $S$ at $p$ corresponds to a unit vector with foot point
$p$ on $S$. Orthogonal projection onto the tangent space $T_pS$ maps
in 1-1 way sphere of unit vectors with foot point $p$  on $S$ onto unit
ball of the tangent space $ T_pS$. Thus the phase space of oriented
lines intersecting $S$ is isomorphic to unit (co-)ball bundle of
$S$. The canonical symplectic form of this bundle coincides with
that defined above. Here and later we identify co-vectors with
vectors by means of the scalar product induced from
$\mathbf{R}^{d}$.

Using these preliminaries, main observation of this subsection is the following. The fact that $S$ corresponds to Gutkin billiard table with the angle
$\delta$ is equivalent to the fact that the hypersurface  $\Sigma_\delta$ of the phase space $\Omega$ defined by
$$
\Sigma_\delta = \{(p,v)\in \Omega: p\in S, v\in T_p S, |v|=\cos\delta\}
$$
is invariant under the billiard ball map. As a corollary we get the following:
\begin{theorem}
    Given Gutkin billiard table in $\mathbf{R}^{d}$, the billiard ball map transforms every characteristic of $\Sigma_\delta$ to a characteristic.
\end{theorem}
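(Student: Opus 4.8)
The plan is to derive this statement as a formal consequence of the symplectic invariance of $\Sigma_\delta$ established in the main observation above, using the fact that the characteristic foliation of a hypersurface is a symplectic invariant. Recall that for a smooth hypersurface $\Sigma$ in a symplectic manifold $(\Omega,\omega)$, the restriction $\omega|_{T_x\Sigma}$ has at each point $x\in\Sigma$ a one-dimensional kernel $\ell_x=(T_x\Sigma)^{\omega}$; this is because any hyperplane $W$ in a symplectic vector space is coisotropic, so that $W^{\omega}\subset W$ with $\dim W^{\omega}=1$. The resulting line field $x\mapsto\ell_x$ integrates to the characteristic foliation of $\Sigma$, whose leaves are precisely the characteristics.

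First I would recall that the billiard ball map $T$ is a symplectomorphism of $(\Omega,\omega)$, as noted above, and that by the main observation $T(\Sigma_\delta)=\Sigma_\delta$, so that $dT_x$ carries $T_x\Sigma_\delta$ isomorphically onto $T_{T(x)}\Sigma_\delta$ for every $x\in\Sigma_\delta$. The core of the argument is then to check that $dT_x(\ell_x)=\ell_{T(x)}$. This follows from a one-line computation: given $v\in\ell_x$ and an arbitrary $w'\in T_{T(x)}\Sigma_\delta$, write $w'=dT_x(w)$ with $w\in T_x\Sigma_\delta$; then, using $T^{*}\omega=\omega$ together with $\omega(v,w)=0$, one obtains $\omega\big(dT_x(v),w'\big)=\omega(v,w)=0$, whence $dT_x(v)\in\ell_{T(x)}$. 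Since $dT_x$ is an isomorphism and both kernels are one-dimensional, this forces $dT_x(\ell_x)=\ell_{T(x)}$. Thus $T$ preserves the characteristic line field, and therefore sends integral curves of this field to integral curves, i.e. characteristics to characteristics.

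I do not expect a genuine obstacle here: once the Gutkin property has been translated into the invariance $T(\Sigma_\delta)=\Sigma_\delta$, the theorem is simply the naturality of the null foliation under symplectomorphisms. The only points demanding care are that $\Sigma_\delta$ is a smooth hypersurface, so that the line field $\ell$ is well defined and of constant rank one everywhere — this holds because $S$ is smooth with positive principal curvatures — and the coisotropy remark guaranteeing $\ell_x\subset T_x\Sigma_\delta$ so that the characteristic direction actually lies tangent to $\Sigma_\delta$. The substantive work is deferred to the subsequent identification of these characteristics in concrete geometric terms, linking the billiard dynamics to geodesics on $S$; it is that description, rather than the present formal statement, which will be exploited in the proofs of Theorems 1.1 and 1.2.
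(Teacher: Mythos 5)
Your proof is correct and follows exactly the paper's (implicit) argument: the paper derives this theorem as an immediate corollary of the facts that the billiard ball map is a symplectomorphism of the space of oriented lines and that the Gutkin property is equivalent to the invariance $T(\Sigma_\delta)=\Sigma_\delta$, from which preservation of the characteristic (null) foliation is standard. You have merely made explicit the routine linear-algebra step — that $dT$ maps the one-dimensional kernel $(T_x\Sigma_\delta)^{\omega}$ to $(T_{T(x)}\Sigma_\delta)^{\omega}$ — which the paper leaves to the reader.
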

\subsection{Differential geometric interpretation}
Now notice, that since $\Sigma_\delta$ is a bundle of tangent
vectors of constant length, then characteristics of $\Sigma_\delta$
are geodesics equipped with their tangent vectors of the length
$\cos\delta$. Thus the following differential geometric
interpretation can be concluded from Theorem 2.1.

    Given Gutkin billiard table $S\subset\mathbf{R}^{d}$.
    We shall denote by $n(p)$ the unit inner normal vector to $S$ at $p$. Let $\gamma$ be a geodesic curve on $S$. Denote by $s$ the arc length parameter on $\gamma$.
    Let $z(s)$ be the unit vector
    $$
    z(s)=\cos\delta\ \dot{\gamma}(s)+\sin\delta\  n(\gamma(s))
    .$$

    Consider the straight line segment  $[\gamma(s) ;\Gamma(s)]$  such that $\Gamma(s)$ belongs to $S$ and
 $$
    \Gamma(s)- \gamma(s)=l(s)z(s),
    $$
        where $l(s)$ is the length of the segment $[\gamma(s) ;\Gamma(s)]$.
    \begin{theorem}
    It then follows that $\Gamma$ is a geodesic on $S$ with regular parameter $s$ (not necessarily proportional to arc-length). Moreover the following two properties are valid:

    1. The vectors $ \{z(s),\dot{\Gamma}(s), \ddot{\Gamma}(s)\}$ belong to a 2-plane.

    2. The angle between $\dot{\Gamma}(s) $ and $z(s)$ equals precisely $\delta$.
\end{theorem}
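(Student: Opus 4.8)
The plan is to read all three assertions off Theorem 2.1, after reinterpreting the geometric construction as a single application of the billiard ball map to a characteristic of $\Sigma_\delta$. First I would identify the relevant phase point and its image. The segment $[\gamma(s);\Gamma(s)]$ in direction $z(s)$ is a billiard chord: its outgoing direction at $\gamma(s)$ is $z(s)=\cos\delta\,\dot\gamma(s)+\sin\delta\,n(\gamma(s))$, whose orthogonal projection onto $T_{\gamma(s)}S$ is $\cos\delta\,\dot\gamma(s)$, a vector of length $\cos\delta$. Hence $(\gamma(s),\cos\delta\,\dot\gamma(s))$ lies on $\Sigma_\delta$, and the billiard ball map sends it to $(\Gamma(s),w(s))$, where $w(s)$ is the orthogonal projection of $z(s)$ onto $T_{\Gamma(s)}S$. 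The defining Gutkin property — equal angle $\delta$ at both endpoints — is exactly the statement that $|w(s)|=\cos\delta$, so the image again lies on $\Sigma_\delta$, as it must.

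Next I would let $s$ vary. Because $\gamma$ is a geodesic parametrized by arc length, the curve $s\mapsto(\gamma(s),\cos\delta\,\dot\gamma(s))$ is precisely a characteristic of $\Sigma_\delta$ in the description of characteristics as geodesics carrying their tangent vectors of length $\cos\delta$. By Theorem 2.1 its image $s\mapsto(\Gamma(s),w(s))$ is again a characteristic, possibly in a different parametrization. Since being a characteristic is a property of the underlying unparametrized curve, this forces $\Gamma$ to be a geodesic and forces $w(s)$ to equal $\cos\delta$ times the unit tangent of $\Gamma$ at $\Gamma(s)$; in particular $w(s)$ is parallel to $\dot\Gamma(s)$, and $s$ is a regular parameter (the billiard map being a diffeomorphism guarantees $\dot\Gamma(s)\neq 0$, while it need not be proportional to arc length on $\Gamma$). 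This already yields the first sentence of the theorem.

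The two numbered properties then follow from elementary vector geometry. For property 2, $w(s)$ is the orthogonal projection of the unit vector $z(s)$ onto $T_{\Gamma(s)}S$ and has length $\cos\delta$, so the angle between $z(s)$ and $w(s)$ is $\delta$; orienting $\dot\Gamma$ along $w(s)$, the angle between $z(s)$ and $\dot\Gamma(s)$ is exactly $\delta$. For property 1, I would decompose $z(s)=w(s)+\bigl(z(s)-w(s)\bigr)$: the remainder $z(s)-w(s)$ is orthogonal to $T_{\Gamma(s)}S$, hence parallel to $n(\Gamma(s))$, so $z(s)\in\mathrm{span}\{\dot\Gamma(s),n(\Gamma(s))\}$. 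On the other hand, reparametrizing $\Gamma$ by its arc length and using that a geodesic has acceleration normal to $S$, one finds $\ddot\Gamma(s)\in\mathrm{span}\{\dot\Gamma(s),n(\Gamma(s))\}$ as well. Thus $z(s)$, $\dot\Gamma(s)$ and $\ddot\Gamma(s)$ all lie in the $2$-plane spanned by $\dot\Gamma(s)$ and $n(\Gamma(s))$.

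The hard part will be the extraction in the second paragraph, namely deducing $w(s)\parallel\dot\Gamma(s)$ together with the regularity of $s$ from Theorem 2.1. The subtlety is that the image is a \emph{reparametrized} characteristic rather than a characteristic in the parameter $s$, so one must argue at the level of the characteristic foliation and use the coupling between foot point and tangent vector in the description of characteristics to pin down $w(s)$. Once that is in place, everything else is bookkeeping with orthogonal projections and the geodesic equation.
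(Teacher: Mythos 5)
Your proposal is correct and takes essentially the same route as the paper, which states Theorem 2.2 as a direct consequence of Theorem 2.1 via exactly your identification: characteristics of $\Sigma_\delta$ are geodesics carrying their tangent vectors of length $\cos\delta$, the curve $s\mapsto(\gamma(s),\cos\delta\,\dot\gamma(s))$ is such a characteristic, its billiard image is again one, and the two numbered properties follow by projecting $z(s)$ onto $T_{\Gamma(s)}S$ and using that geodesic acceleration lies in $\mathrm{span}\{\dot\Gamma,n(\Gamma)\}$. The one point you gloss over --- that $\dot\Gamma(s)$ is a \emph{positive} multiple of the tangential projection of $z(s)$, so the angle is $\delta$ rather than $\pi-\delta$ (you cannot freely ``orient'' $\dot\Gamma$, since the parametrization by $s$ is fixed by the construction) --- is left implicit in the paper as well, and is anyway forced by continuity together with the positivity of $\langle\dot\Gamma,z\rangle=\dot{l}+\cos\delta$ at some point, since $l$ is bounded.
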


\subsection{Deviation from osculating 2-plane}
Notice that since all principal curvatures of $S$ are assumed to be strictly positive, then for any geodesic $\gamma$ on $S$
the curvature $k$ of $\gamma$ in $\mathbf{R}^{d}$ is strictly positive. Therefore we can write first three Frenet equations for $\gamma$ as follows. Denote $v(s)=\dot{\gamma}(s)$, so
\begin{equation}\label{f1}
\dot{v}(s)=k(s)\ n(s).
\end{equation}
Next,  $\dot{n}$ must be orthogonal to $n$ and so belongs to the tangent space to $S$. Hence it can be written as
\begin{equation}
\dot{n}(s)=x\cdot v(s)+ \tau(s) w(s),
\end{equation}
where $w$ is a unit vector in $\mathbf{R}^{d}$ orthogonal to $Span\{v,n\}$.
Differentiating $<v,n>$ along $\gamma$ we get $x\equiv -\tau$.
So
\begin{equation}\label{f2}
\dot{n}(s)=-k(s) v(s)+ \tau(s) w(s).
\end{equation}
In a similar way
$$\frac{d}{ds}\left< w,v\right>=0=\left<\dot w,v\right> + k\left< w,n\right>=\left<\dot w,v\right>,$$
and hence $\dot w$ is orthogonal to $v$ and also to $w$.
Therefore we can write :
\begin{equation}\label{f3}
    \dot{w}(s)=-\tau(s)n(s)+\hat{w}
\end{equation}
where $\hat{w}$ is orthogonal to $Span\{v,n,w\}$.

Notice, that if $d=3$ then $w$ is just a bi-normal vector of $\gamma$, $\hat{w}\equiv0$ and (\ref{f1}), (\ref{f2}),(\ref{f3}) are usual Frenet equations, where $\tau$ is torsion of $\gamma$.
It is important that also in higher dimensions one concludes from (\ref*{f2})  that the function $\tau$ vanishes iff the curve $\gamma$ lies in a 2-plane.

Now we are in position to interpret the conditions 1. and 2.
of Theorem 2.2.
The second condition is easy. Write using Frenet equations (\ref{f1}), (\ref{f2})
\begin{equation}\label{e4}
\dot{\Gamma}=\dot{\gamma}+\dot{l}(\cos\delta v+\sin\delta n)+l
(\cos\delta \ kn+\sin\delta(-kv+\tau w))=
\end{equation}
$$
\ \  =(1+\dot{l}\cos\delta-kl\sin\delta)v+(\dot{l}\sin\delta+kl\cos\delta)n+(\tau l\sin\delta) w.
$$
Using (\ref{e4}) one computes

\begin{equation}\label{e5}
|\dot{\Gamma}|^2=(\dot{l}+\cos\delta)^2+(kl-\sin\delta)^2+\tau^2l^2\sin^2\delta,
\end{equation}
and also
\begin{equation}\label{e6}
\left <\dot{\Gamma},z\right>=\cos\delta(1+\dot{l}\cos\delta-kl\sin\delta)+\sin\delta(\dot{l}\sin\delta+kl\cos\delta)=\dot{l}+\cos\delta.
\end{equation}

Therefore Condition 2. of Theorem 2.2 which reads:
\begin{equation}\label{e7}
\left <\dot{\Gamma},z\right>=|\dot{\Gamma}|\cos\delta.
\end{equation}
takes the form:
\begin{equation}\label{e8}
\dot{l}+\cos\delta=\cos\delta\sqrt{(\dot{l}+\cos\delta)^2+(kl-\sin\delta)^2+\tau^2l^2\sin^2\delta}.
\end{equation}
Simplifying (\ref{e8}) we have:
\begin{equation}\label{e9}
\dot{l}+\cos\delta=\frac{\cos\delta}{\sin\delta}\sqrt{(kl-\sin\delta)^2+\tau^2l^2\sin^2\delta}.
\end{equation}
Since $\dot{\Gamma}$ cannot vanish ($s$ is a regular parameter on $\Gamma$) we can conclude from (\ref{e9}):
\begin{proposition}\label{p1}
    Terms $(kl-\sin\delta)$ and $\tau$ do not vanish simultaneously.
\end{proposition}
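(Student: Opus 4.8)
The plan is to argue by contradiction, using the regularity of $\Gamma$ as the only real input. Suppose that at some parameter value $s_0$ both quantities vanish, i.e. $kl-\sin\delta=0$ and $\tau=0$ at $s_0$. Then the expression under the square root in (\ref{e9}) is zero at $s_0$, so the entire right-hand side of (\ref{e9}) vanishes there, which forces $\dot l+\cos\delta=0$ at $s_0$ as well.

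Next I would feed these three vanishing quantities into the formula (\ref{e5}) for $|\dot\Gamma|^2$. Since $|\dot\Gamma|^2$ is there written as the sum of the three nonnegative terms $(\dot l+\cos\delta)^2$, $(kl-\sin\delta)^2$ and $\tau^2l^2\sin^2\delta$, and each of them is now zero at $s_0$, we obtain $|\dot\Gamma(s_0)|^2=0$, that is $\dot\Gamma(s_0)=0$. This contradicts the hypothesis, recalled immediately before the statement, that $s$ is a regular parameter on $\Gamma$, so that $\dot\Gamma$ never vanishes. Hence $(kl-\sin\delta)$ and $\tau$ cannot vanish simultaneously.

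I do not expect a genuine obstacle here: the whole content is the observation that the simultaneous vanishing of $(kl-\sin\delta)$ and $\tau$ propagates through (\ref{e9}) to annihilate the remaining summand $(\dot l+\cos\delta)^2$ in (\ref{e5}), collapsing $|\dot\Gamma|^2$ to zero against the regularity of $s$. The one point worth keeping in mind is that (\ref{e9}) was obtained from (\ref{e8}) by squaring; one should note that the sign is consistent, since (\ref{e8}) exhibits $\dot l+\cos\delta=\cos\delta\,|\dot\Gamma|\ge 0$, which is precisely what justifies passing to (\ref{e9}) with the nonnegative square root and hence reading off $\dot l+\cos\delta=0$ from the vanishing of its right-hand side.
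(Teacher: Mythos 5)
Your proof is correct and is essentially the argument the paper intends: the paper's one-line justification (``since $\dot{\Gamma}$ cannot vanish \ldots we can conclude from (\ref{e9})'') is exactly your chain, namely that simultaneous vanishing of $(kl-\sin\delta)$ and $\tau$ kills the right-hand side of (\ref{e9}), forcing $\dot{l}+\cos\delta=0$, whence all three summands in (\ref{e5}) vanish and $\dot{\Gamma}=0$, contradicting the regularity of $s$. Your additional remark on the sign consistency in passing from (\ref{e8}) to (\ref{e9}) is a sound (if implicit in the paper) observation and does not change the argument.
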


\subsection{Differential equation on $\tau$}

 Condition 1. of Theorem 2.2.
says that the vectors $z, \dot{\Gamma}, \ddot{\Gamma}$ are linearly dependent. Denote by  $\pi$ orthogonal projection onto the 3-dimensional space $W=Span\{v, n, w\}$.
Then the
three vectors $\pi(z), \pi( \dot{\Gamma}), \pi( \ddot{\Gamma}) $  must be linearly dependent as well. We need to compute $\ddot{\Gamma}$. Differentiating (\ref{e4}) and using Frenet formulas we get $\pi(\ddot{\Gamma})=a_1v+a_2n+a_3w $:
\begin{equation}\label{e10}
a_1=(\ddot{l}\cos\delta-2k\dot{l})\sin\delta-\dot{k}l\sin\delta-k^2l\cos\delta,
\end{equation}
$$
a_2=1+\dot{l}+2k\dot{l}\cos\delta-k^2l\sin\delta+\ddot{l}\sin\delta+\dot{k}l\cos\delta-\tau^2l\sin\delta,
$$
$$
a_3=2\tau\dot{l}\sin\delta+\tau kl\cos\delta+\dot{\tau}l\sin\delta.
$$

Finally we write the determinant:
$$
D=\det
\begin{Vmatrix}
\cos\delta&\sin\delta&0\\
1+\dot{l}\cos\delta-kl\sin\delta&\dot{l}\sin\delta+kl\cos\delta&\tau l\sin\delta \\
a_1&a_2&a_3
\end{Vmatrix}=0
$$
Fortunately, there is no need to compute this determinant exactly but to notice the following. Every term in the last column  (also terms from $a_3$) contains $\tau$ or $\dot{\tau}$ as a multiplier. Thus $D$ can be written as
\begin{equation}\label{D}
D= A(s)\dot{\tau}+B(s)\tau.
\end{equation}
We don't care about $B(s)$ but need to find $A(s)$ explicitly. But
this is easy because $\dot{\tau}$ is present only in $a_3$. Thus
$$
A(s)=l\sin\delta(\cos\delta(\dot{l}\sin\delta+kl\cos\delta)-\sin\delta(1+\dot{l}\cos\delta-kl\sin\delta))=
$$
$$
=l\sin\delta (kl-\sin\delta).
$$
So we have the following
\begin{theorem}
    Condition 2. of Theorem 2.2 implies the following differential equation on $\tau$:
    $$l\sin\delta (kl-\sin\delta)\dot{\tau}+B(s)\tau=0.$$
    Moreover, if $\tau$ vanishes at one point it must vanish identically.
\end{theorem}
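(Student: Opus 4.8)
The plan is to treat the two assertions in turn. The differential equation is essentially the bookkeeping already carried out above: the linear dependence of $z$, $\dot\Gamma$, $\ddot\Gamma$ asserted in Theorem 2.2 is equivalent, after applying the projection $\pi$ onto $W=Span\{v,n,w\}$, to the vanishing of the determinant $D$. Expanding $D$ along its third column $(0,\ \tau l\sin\delta,\ a_3)^{T}$ and recalling that $a_3=2\tau\dot l\sin\delta+\tau kl\cos\delta+\dot\tau l\sin\delta$, one sees that $\dot\tau$ occurs only through the last summand of $a_3$, whereas every monomial free of $\dot\tau$ carries a factor $\tau$. Hence $D=A(s)\dot\tau+B(s)\tau$, and the coefficient of $\dot\tau$ is $l\sin\delta$ times the $2\times 2$ minor formed by the first two rows and columns, which equals $kl-\sin\delta$; this gives $A(s)=l\sin\delta(kl-\sin\delta)$ and the stated equation, with no need to evaluate $B$.

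For the propagation of zeros, which is the real content, I would argue by connectedness on the parameter interval $J$ of $s$. Put $Z=\{s\in J:\tau(s)=0\}$; this set is closed in $J$ since $\tau$ is continuous. To show that $Z$ is also open, fix $s_0\in Z$. As $\tau(s_0)=0$, Proposition \ref{p1} forbids $kl-\sin\delta$ from vanishing at $s_0$ as well, and since $l>0$ and $\sin\delta>0$ we obtain $A(s_0)=l(s_0)\sin\delta\,(k(s_0)l(s_0)-\sin\delta)\neq 0$. By continuity $A$ remains nonzero on a neighbourhood $U$ of $s_0$, where the equation can be divided through by $A$ and written as $\dot\tau=-\frac{B(s)}{A(s)}\,\tau$, a first-order linear equation with continuous coefficient. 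Its solution through $\tau(s_0)=0$ is unique and vanishes identically; explicitly $\tau(s)=\tau(s_0)\exp\!\big(-\int_{s_0}^{s}\frac{B(\sigma)}{A(\sigma)}\,d\sigma\big)=0$ on $U$. Thus $U\subseteq Z$, so $Z$ is open; being nonempty, closed and open in the connected interval $J$, it must be all of $J$, i.e. $\tau\equiv 0$.

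I expect the only delicate point to be the possible degeneration of the leading coefficient $A(s)=l\sin\delta(kl-\sin\delta)$, which vanishes exactly where $kl=\sin\delta$; at such a point one can neither divide by $A$ nor invoke the uniqueness theorem, and the all-or-nothing alternative could a priori fail. Proposition \ref{p1} is precisely the tool that removes this obstruction, since it guarantees that at every zero of $\tau$ the factor $kl-\sin\delta$, and hence $A$, is nonzero. In short, the structural fact that $D-A\dot\tau$ is divisible by $\tau$ makes $\tau\equiv 0$ an admissible branch, while Proposition \ref{p1} ensures the equation is nondegenerate exactly along that branch, so that a single zero must propagate to the entire geodesic.
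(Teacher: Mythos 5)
Your proposal is correct and follows essentially the same route as the paper: the coefficient $A(s)=l\sin\delta\,(kl-\sin\delta)$ is read off from the determinant exactly as in the text, and the propagation of zeros is the paper's own open-closed argument on $Z=\{s:\tau(s)=0\}$, using Proposition \ref{p1} to guarantee nondegeneracy of the equation at zeros of $\tau$ and then uniqueness for the linear ODE. Your explicit identification of the $2\times 2$ minor and the remark about where $A$ may degenerate merely make explicit what the paper leaves implicit.
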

\begin{proof}
Indeed consider the subset $Z$ of $\mathbf{R}$  defined by:
$$
Z=\{s\in\mathbf{R}:\tau(s)=0 \}
$$
 By definition, $Z$ is obviously a closed set. On the other hand, it follows from Proposition 2.3 that if $\tau(s_0)=0$ then $(k(s_0)l(s_0)-\sin\delta)$ does not vanish and hence by the uniqueness for the differential equation, $\tau(s)$ vanishes in
a neighborhood of $s_0$. So $Z$ is an open set. Thus $Z$ coincides with the whole real line.
\end{proof}
\subsection{Planarity of geodesics}
As a consequence of Theorem 2.4 we get planarity of some geodesic curves of $S$.
\begin{theorem}\label{planar}
    Every geodesic curve on $S$ which at some point $p$ passes in a principal direction lies necessarily in a 2-plane
    spanned by this direction and the normal line at $p$. Moreover, this geodesic curve
     has a principal direction at every point where it passes.
\end{theorem}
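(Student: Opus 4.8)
The plan is to reduce the entire statement to the vanishing of the function $\tau$ associated to the geodesic $\gamma$ through the frame equations (\ref{f1})--(\ref{f2}), and then to feed this into Theorem 2.4. The crucial observation I would isolate first is that $\tau$ measures exactly the failure of $v=\dot\gamma$ to point in a principal direction of $S$. Precisely, I claim that at a parameter value $s_0$ the geodesic passes in a principal direction if and only if $\tau(s_0)=0$.

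To prove this claim I would argue as follows. Along $\gamma$ we have $\dot n(s)=dn(v)$, which up to sign is the image of $v$ under the shape operator of $S$; the shape operator is self-adjoint for the induced metric and its eigenvectors are precisely the principal directions. On the other hand (\ref{f2}) splits this same vector as $\dot n=-kv+\tau w$, where $-kv$ is the component along $v$ (consistent with $\langle\dot n,v\rangle=-k$, obtained by differentiating $\langle n,v\rangle=0$) and $\tau w$ is the component orthogonal to $v$ inside $T_pS$, since $\dot n\perp n$ automatically. Hence $\tau(s_0)=0$ holds exactly when $dn(v)$ is parallel to $v$, i.e. when $v(s_0)$ is an eigenvector of the shape operator, which is the statement that $\gamma$ passes in a principal direction at $p=\gamma(s_0)$. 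I would note that the unit vector $w$ need not be globally defined at points where $\tau=0$, but this causes no trouble: the well-defined object is the orthogonal component $\dot n+kv=\tau w$, whose length $|\tau|$ is defined everywhere, and it is its vanishing that I track.

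Granting the claim, the rest is immediate. Suppose $\gamma$ passes in a principal direction at $p=\gamma(s_0)$; then $\tau(s_0)=0$, and Theorem 2.4 forces $\tau\equiv0$ along $\gamma$. By the remark following (\ref{f2}), the identical vanishing of $\tau$ is equivalent to $\gamma$ lying in a $2$-plane. To identify that plane I would use that for $\tau\equiv0$ the osculating plane is constant and, by (\ref{f1}), is spanned by $v$ and $\dot v=kn$ with $k>0$; evaluated at $p$ this affine plane is $p+\mathrm{Span}\{v(s_0),n(s_0)\}$, which is exactly the $2$-plane through $p$ spanned by the given principal direction and the normal line at $p$. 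Finally, reading the claim in the converse direction, $\tau\equiv0$ says $dn(v)\parallel v$ at every $s$, so $v(s)$ is a principal direction at every point of $\gamma$.

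The single substantive input is Theorem 2.4; everything else is the translation of $\tau$ into the off-principal component of $\dot n$. Accordingly I expect the only point that needs real care to be this translation, resting on the self-adjointness of the shape operator, which guarantees that $dn(v)\parallel v$ is equivalent to $v$ being principal — the rest is bookkeeping with the Frenet-type frame and an appeal to the already-proved rigidity of the equation for $\tau$.
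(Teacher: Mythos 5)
Your proposal is correct and follows essentially the same route as the paper: both identify $\tau(s_0)=0$ with the principal-direction condition via the Frenet-type equation (\ref{f2}) applied to the surface normal along the geodesic, invoke Theorem 2.4 to propagate the vanishing of $\tau$, conclude planarity, and read the same equivalence backwards to get a principal direction at every point. Your explicit two-way formulation of the claim, the identification of the $2$-plane as $p+\mathrm{Span}\{v(s_0),n(s_0)\}$, and the remark that only $\tau w=\dot n+kv$ (not $w$ itself) is globally defined are welcome clarifications of details the paper leaves implicit, but they do not change the argument.
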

\begin{proof}Principal directions correspond to the eigenvectors of
    Shape operator. If $\gamma$ has principal direction at $p=\gamma(0)$
    then
    $$\left. \frac{d}{ds} \right|_{s=0} n(\gamma(s))=-k \dot{\gamma}(0).
    $$
    Comparing this formula with Frenet formula (\ref{f2}) we get
    $$\tau(0)=0.$$
     Hence by Theorem 2.4 function $\tau(s)$ vanish identically,
     so the geodesic lies in the two plane. Since at the points of geodesic curve normal to $S$ equals that of the geodesic, then the derivative of the normal satisfies (\ref{f2}) at every point with $\tau=0$ which means that this geodesic has principal direction everywhere on its way.
    \end{proof}
\section{Proof of Theorems 1.1. }
In this Theorem we have $d=3$, so $S$ is two dimensional
and for every point $p\in S$ either $p$ is umbilical or there  are
precisely two orthogonal principal directions.  Let $p$ be a
non-umbilic point, so in a neighborhood of $p$ there are two
orthogonal unit vector fields  $v_1$ and $v_2$ going in principal
directions. Moreover it follows from Theorem \ref{planar} that these
vector fields are orthogonal geodesic vector fields, i.e integral
curves are geodesics. In such a case passing to curvature coordinates on $S$, it is easy to see that
the Riemannian metric  of $S$ must be flat in a
neighborhood of $p$. Indeed in the curvature coordinates $(x,y)$ metric takes the form
$$
ds^2=E(x,y)dx^2+G(x,y)dy^2.
$$
Since $\{x=const\}$ and $\{y=const\}$ are geodesics we get $E=E(x),\ G=G(y)$, but then the metric is flat.
Flatness of the metric yields a contradiction, since $S$ is assumed
to have positive principal curvatures. This argument implies that
all points of $S$ are umbilical and hence $S$ is a round sphere. This
completes the proof in three dimensional case.

\section{Proof of Theorem 1.2. }Now we in the case  $d>3$. Suppose $\gamma$ is a geodesic of $S$ lying in a 2-plane $\sigma$. Then, as a section of the convex hypersurface, $\gamma$ is a convex closed curve in the plane $\sigma$. In addition, since normal to $S$ and normal to $\gamma$ are the same, then the planar billiard inside $\gamma$ is a two dimensional
Gutkin billiard, therefore is very special as I explained in the Introduction.

\subsection{}Let us show now that $S$ has a constant width. Take a point $p\in S$  and any two orthogonal principal directions $v_1, v_2$  at $p$. Then geodesics $\gamma_1, \gamma_2$ in these directions are simple closed convex curves contained in the 2-planes $\sigma_1, \sigma_2$. Intersection of these planes is precisely the normal line $l_p$ through $p$. Therefore the curves $\gamma_1$ and $\gamma_2$ must intersect at a unique point in addition to $p$ which lies on this normal line $l_p$. Indeed, in addition to $p$, $\gamma_1$ must intersect $l_p$ in some other point $p'$, also
$\gamma_2$ must intersect $l_p$ in some point $p''$. Then there are
three points $p,p',p''$ of the intersection of the line $l_p$ with
the convex hypersurface. Therefore, $p'=p''$. Moreover, $l_p$ is
orthogonal to $S$ at $p'$. Indeed, normal to $S$ at $p'$ coincides with
normal to  $\gamma_1$ and to $\gamma_2$ because they are geodesic curves on $S$, thus this normal is parallel to the
intersection of the 2-planes $\sigma_1,\sigma_2$ which is $l_p$. So
we proved the so called double normal property, which is known to be
equivalent to the constant width condition \cite{G-W}. In addition
the plane curves $\gamma_1$ and $\gamma_2$ are of course also of
constant width.

\subsection{} Let us show now that every point of $S$ is umbilical in the case when $\delta$ is not a solution (\ref{G}) for odd $n$. In other words, I claim that all principal curvatures of any point $p$ are all equal to $\frac{1}{R}$, where $R$
is half width of $S$. This implies immediately that $S$ must be a
sphere. In order to prove the claim, take any two principal directions at $p$ and the
geodesics $\gamma_1$ and $\gamma_2$ as above.  Recall that
$\gamma_1$ and $\gamma_2$ are planar Gutkin billiards for the same
angle $\delta$. There are two possibilities. In the first case
$\delta$ is not a solution of (\ref{G}) for any $n$. In this case by
Gutkin result $\gamma_1$ and $\gamma_2$ are circles with the same
diameter. In the second case $\delta$ is a solution of (\ref{G}) for
an even $n$. In such a case \cite{gu1} the Fourier expansion of
$\rho_1(\phi)$, $\rho_2(\phi)$  contains only harmonics of even
multiples of $\phi$. On the other hand we know from the previous
argument that $\gamma_1$ and $\gamma_2$ are of constant width.
Therefore, we have for there curvature radii:
$$
\rho_{1,2}(\phi+\pi)+\rho_{1,2}(\phi)=const.
$$
But this is not possible for functions with even harmonics only.

So we conclude that both $\gamma_1$ and $\gamma_2$ are circles.
Moreover since they have the same diameter, then also the same
curvature $\frac{1}{R}$. This proves the claim and Theorem 1.2.

\section {Rigidity of planar Gutkin billiard tables}
\subsection{Proof of Theorem 1.3} In order to prove Theorem 1.3 we use a new generating function for Birkhoff billiards found in \cite{BM}.
 We fix a point inside $S$ and us the coordinates $(p,\phi)$ on the space of oriented lines intersecting $S$.
 Here $\phi$ is the angle between $x$ axes and the positive  normal to the line, and $p$ is a signed distance to the line. It is proved in \cite{BM} that in these coordinates billiard map is a twist map and can be given with the help of generating function
 \begin{equation}\label{S}
 \rm S=2h\left(\frac{\phi_1+\phi_2}{2}\right) \sin\left(\frac{\phi_2-\phi_1}{2}\right),
 \end{equation}
 where $h(\phi)$ denotes the supporting function of the curve $S$.
 \begin{remark}In \cite{BM}  we used this function near the boundary showing its advantage in proof of KAM type results for billiards. But in fact in the coordinates $(p,\phi)$ billiard ball map satisfies twist condition globally  with
 \begin{equation}\label{s}
 \rm S=2h\left(\frac{\phi_1+\phi_2}{2}\right)\left | \sin\left(\frac{\phi_2-\phi_1}{2}\right)\right|.
 \end{equation}
 Moreover, in higher dimensions billiard ball map is still a twist map in these symplectic coordinates and has a very simple generating function which we shall derive below.
 \end{remark}
 We shall need expressions for second derivatives of $\rm S$:

 Let me denote by $${\phi}=\frac{\phi_1+\phi_2}{2} ;\quad \alpha=\frac{\phi_2-\phi_1}{2}$$

 Then we have following formulas for second partial derivatives of $\rm S $:
 $$
 \rm S_{11}(\phi_1,\phi_2)=\frac{1}{2}(h''(\phi)-h(\phi))\sin \alpha -h'(\phi)\cos \alpha;
 $$
$$
\rm S_{22}(\phi_1,\phi_2)=\frac{1}{2}(h''(\phi)-h(\phi))\sin \alpha +h'(\phi)\cos \alpha;
$$
$$
\rm S_{12}(\phi_1,\phi_2)=\frac{1}{2}(h''(\phi)+h(\phi))\sin \alpha .
$$
From the last formula the twist condition $\rm S_{12}>0$ holds true since  $$h''(\phi)+h(\phi)=\rho(\phi)$$ is the curvature radius.

The following statement follows in a standard way \cite{B1}
\cite{B2}  from the assumption of total integrability:

\begin{proposition}
For any Gutkin billiard table such that $\Omega_{\delta_1\delta_2}$  is foliated by rotational invariant curves the following inequality holds:
\begin{equation}\label{in}
\int_{\Omega_{\delta_1\delta_2}}(\rm S_{11}+2\rm S_{12}+\rm S_{22}) d\mu\leq 0,
\end{equation}
where $d\mu$ is the invariant measure.
\end{proposition}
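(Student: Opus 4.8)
The plan is to run the standard Hopf‑type argument of \cite{B1},\cite{B2}, adapted to the generating function $\rm S$ from (\ref{s}). Denote the billiard map by $T$. First I would use that, by the twist condition $\rm S_{12}>0$ together with Birkhoff's theorem, each rotational invariant curve foliating $\Omega_{\delta_1\delta_2}$ is a Lipschitz graph $p=u(\phi)$ over the angle circle, and that $T$ restricts on it to an orientation preserving circle homeomorphism $\phi_1\mapsto\phi_2=F(\phi_1)$, so that $F'>0$ almost everywhere. Since $\rm S$ generates $T$ through $p_1=-\rm S_1(\phi_1,\phi_2)$ and $p_2=\rm S_2(\phi_1,\phi_2)$, substituting $p_1=u(\phi_1)$ and $p_2=u(\phi_2)$ on one fixed leaf gives the two identities
\begin{equation}
u(\phi_1)=-\rm S_1(\phi_1,F(\phi_1)),\qquad u(F(\phi_1))=\rm S_2(\phi_1,F(\phi_1)).
\end{equation}

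Next I would differentiate both identities in $\phi_1$, obtaining
\begin{equation}
u'(\phi_1)=-\rm S_{11}-\rm S_{12}\,F',\qquad u'(\phi_2)\,F'=\rm S_{12}+\rm S_{22}\,F',
\end{equation}
where all second derivatives are understood at $(\phi_1,\phi_2)=(\phi_1,F(\phi_1))$. Solving for $\rm S_{11}$ and $\rm S_{22}$ and adding produces the pointwise identity
\begin{equation}\label{keyid}
\rm S_{11}+2\rm S_{12}+\rm S_{22}=\big(u'(\phi_2)-u'(\phi_1)\big)-\rm S_{12}\Big(\sqrt{F'}-\frac{1}{\sqrt{F'}}\Big)^2,
\end{equation}
valid almost everywhere on $\Omega_{\delta_1\delta_2}$.

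Finally I would integrate (\ref{keyid}) against the invariant measure $d\mu$. Writing $g(\phi,p):=u'(\phi)$, where $u$ is the leaf through $(\phi,p)$, one has $u'(\phi_1)=g$ and $u'(\phi_2)=g\circ T$, since $T$ maps each leaf to itself; hence the first bracket is a coboundary and $\int_{\Omega_{\delta_1\delta_2}}\big(u'(\phi_2)-u'(\phi_1)\big)\,d\mu=\int_{\Omega_{\delta_1\delta_2}}(g\circ T-g)\,d\mu=0$ by invariance of $\mu$. The remaining term is nonpositive, because $\rm S_{12}>0$ by the twist condition while the squared factor is nonnegative. This yields exactly (\ref{in}).

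The main obstacle I expect is not the algebra but the regularity and measure‑theoretic bookkeeping. Birkhoff invariant curves are only Lipschitz, so $u'$ and $F'$ exist merely almost everywhere; I must therefore organize the foliation as a measurable family of Lipschitz graphs, justify that the differentiations leading to (\ref{keyid}) hold in the almost‑everywhere sense, and verify that $g\in L^1(d\mu)$ so that the telescoping integral genuinely vanishes. This delicate verification is precisely where the ``standard way'' of \cite{B1},\cite{B2} is invoked.
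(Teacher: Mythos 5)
Your proposal is correct and is exactly the ``standard way'' that the paper invokes by citing \cite{B1},\cite{B2} instead of writing a proof: the pointwise identity $\rm S_{11}+2\rm S_{12}+\rm S_{22}=\big(u'(\phi_2)-u'(\phi_1)\big)-\rm S_{12}\big(\sqrt{F'}-1/\sqrt{F'}\big)^2$ along the Birkhoff Lipschitz graphs, followed by the vanishing of the coboundary term under the invariant measure, is the intended argument, and your algebra checks out (and is insensitive to the sign convention $p_1=-\rm S_1$, $p_2=\rm S_2$ versus its opposite). You also correctly identify the only delicate point --- the a.e.\ differentiability of $u$ and $F$, the chain rule for the bi-Lipschitz conjugacy $F$, and the measurable organization of the foliation ensuring $g\in L^{1}(d\mu)$ --- which is precisely what the cited references handle.
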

Next we compute the integral (\ref{in}).

First notice that the invariant measure can be written in the form:
$$
d\mu= \rm S_{12}\ d\phi_1d\phi_2= \frac{1}{2}\rho(\phi)\sin \alpha\  d\phi_1d\phi_2.
$$
Using this and passing from $(\phi_1,\phi_2)$ to $(\phi,\alpha)$
we get the integral

$$
I=\int_{\delta_1}^{\delta_2}\int_0^{2\pi}(\rm S_{11}+2\rm S_{12}+\rm S_{22})\rm S_{12} \ d\phi d\alpha \leq 0.
$$

Substituting the exact expressions for the derivatives and applying Fubini theorem and integration by parts we compute:
$$
I=2\int_{\delta_1}^{\delta_2}\int_0^{2\pi}h''(\phi)(h''(\phi)+h(\phi))\sin^2\alpha \ d\phi d\alpha=
$$
$$
=2\int_{\delta_1}^{\delta_2}\sin^2\alpha\  d\alpha\cdot\int_0^{2\pi}h''(\phi)(h''(\phi)+h(\phi))d\phi =
$$
$$
=2\int_{\delta_1}^{\delta_2}\sin^2\alpha\  d\alpha\cdot\int_0^{2\pi}[(h''(\phi))^2-(h'(\phi))^2]d\phi .
$$

Notice that the last integral is non-negative by Wirtinger inequality applied to the function $h'$. Comparing with
(\ref{in}) we get $I=0$ and hence the equality in the Wirtinger inequality.
But the equality in Wirtinger inequality is possible only for $h'=a\cos\phi+b\sin\phi$ that is $h=h_0+a\sin\phi-b\cos\phi$, which means that
the curve $S$ is a circle. This completes the proof of Theorem 1.3.
\subsection{Formula for Generating function in higher dimensions}
Consider Birkhoff  billiard inside a convex  hypersurface $S$ in
$\mathbf{R}^d$. Billiard ball map $T$ acts on the subset $\Omega$ of
the space of oriented lines intersecting $S$. The latter is
isomorphic to $T^*\mathbf{S}^{d-1}$, every oriented line can be
represented uniquely in the form :
$$
l=\{m+nt\}, |n|=1,\ n\perp m; \quad (m,n)\in T^*\mathbf{S}^{d-1}, n\in\mathbf{S}^{d-1}, m\in T_n^*\mathbf{S}^{d-1}\}
$$
(as before we identify tangent and cotangent spaces).

Consider the Gauss map $$G:S \rightarrow \mathbf{S}^{d-1}, x\mapsto n(x),$$ where $n(x)$ is the outer unit normal to $S$ at $x$. With the help of $G$ it is easy to write supporting function for $S$:
$$
h(n)=\left<G^{-1}n,n  \right>, \quad n\in \mathbf{S}^{d-1}.
$$
\begin{theorem}
    The billiard ball map $T$ can be described with the help of generating function $\rm S$ as follows
    $$
    T :(m_1,n_1)\mapsto (m_2,n_2)\quad \Leftrightarrow \quad m_1=D_1 \rm S,\ m_2=-D_2 \rm S,
    $$
    where $
    \rm S:\mathbf{S}^{d-1}\times\mathbf{S}^{d-1}\setminus\Delta\rightarrow \mathbf{R},
    $ is the function defined by the formula:

    \begin{equation}\label{S}
     \ \rm S(n_1,n_2)=\left< G^{-1}
    \left( \frac{n_1-n_2}{|n_1-n_2|}   \right), n_1-n_2
    \right>=
    \end{equation}
    $$
    =h\left( \frac{n_1-n_2}{|n_1-n_2|}   \right)|n_1-n_2|.
    $$
    Moreover the twist condition is satisfied in the following sense:
    The linear operators
    $$
    D_{12}=D_1\circ D_2:T_{n_1}\mathbf{S}^{d-1} \rightarrow T_{n_2}^*\mathbf{S}^{d-1},\quad D_{21}=D_2\circ D_1:T_{n_2}\mathbf{S}^{d-1} \rightarrow T_{n_1}^*\mathbf{S}^{d-1}
    $$
    are isomorphisms.
    \end{theorem}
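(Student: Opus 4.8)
The plan is to establish the formula by directly verifying the exact-symplectic identity that characterizes a generating function, and then to read off the twist property from strict convexity of $S$. Recall from Section~2 that the oriented lines form $T^*\mathbf{S}^{d-1}$ with tautological one-form $\lambda=\langle m,dn\rangle$ and that $T$ preserves $\omega=d\lambda$. Saying that $\mathrm S$ generates $T$ through $m_1=D_1\mathrm S$, $m_2=-D_2\mathrm S$ is exactly the requirement that the one-form identity
$$d\mathrm S=\langle m_1,dn_1\rangle-\langle m_2,dn_2\rangle$$
hold along the graph of $T$, regarded as a graph over the pair of directions $(n_1,n_2)$. So the central task is to compute $d\mathrm S$.

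First I would record the geometric content of the data defining $\mathrm S$. Put $\nu=(n_1-n_2)/|n_1-n_2|$. A one-line computation gives that the Euclidean reflection of $n_1$ in the hyperplane $\nu^\perp$ is $n_2$, so that $\nu$ is the unit normal at the reflection point and $x:=G^{-1}(\nu)\in S$ is that point; thus $\mathrm S=\langle x,n_1-n_2\rangle$. Differentiating, the term $\langle dx,n_1-n_2\rangle$ vanishes because $dx\in T_xS=\nu^\perp$ while $n_1-n_2$ is parallel to $\nu$. Hence $d\mathrm S=\langle x,dn_1\rangle-\langle x,dn_2\rangle$. Since $x$ lies on each of the two lines, $x=m_i+t_i n_i$ with $m_i\perp n_i$, and since $\langle n_i,dn_i\rangle=0$ on the sphere, one gets $\langle x,dn_i\rangle=\langle m_i,dn_i\rangle$. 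This is precisely the identity above, giving $D_1\mathrm S=m_1$ and $D_2\mathrm S=-m_2$. Conversely, for given distinct $n_1,n_2$ these relations recover $m_1,m_2$ as the feet of the common point $x$ on the two lines, and since the reflection law holds automatically for this $\nu$, the locus $\{m_1=D_1\mathrm S,\ m_2=-D_2\mathrm S\}$ is exactly the graph of the billiard map.

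For the twist statement I would differentiate $D_1\mathrm S=m_1=\pi_{n_1}(x)$, the orthogonal projection of $x$ onto $T_{n_1}\mathbf{S}^{d-1}=n_1^\perp$, once more in $n_2$, obtaining
$$D_{21}=D_2\circ D_1=\frac{\partial m_1}{\partial n_2}=\pi_{n_1}\circ (DG^{-1})_\nu\circ\frac{\partial\nu}{\partial n_2},$$
a composition of three linear maps between $(d-1)$-dimensional spaces. Each factor is an isomorphism: $(DG^{-1})_\nu\colon\nu^\perp\to T_xS=\nu^\perp$ is the inverse Weingarten map, invertible precisely because every principal curvature is positive; and $\partial\nu/\partial n_2$ equals, up to the scalar $-1/|n_1-n_2|$, the restriction to $n_2^\perp$ of orthogonal projection onto $\nu^\perp$, which is injective because $\langle n_2,\nu\rangle\ne0$ (indeed $\langle n_1,n_2\rangle<1$ forces $\nu\notin n_2^\perp$), hence an isomorphism by dimension count; the same reasoning shows $\pi_{n_1}|_{\nu^\perp}$ is an isomorphism, using $\langle n_1,\nu\rangle>0$. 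Therefore $D_{21}$ is an isomorphism, and $D_{12}$ follows by the symmetric computation, differentiating $m_2$ in $n_1$.

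The only genuinely delicate point is the bookkeeping of the several $(d-1)$-dimensional subspaces of $\mathbf{R}^d$ and the identifications $T_xS=\nu^\perp=T_\nu\mathbf{S}^{d-1}$; once these are in place, the vanishing of $\langle dx,n_1-n_2\rangle$ makes the first-derivative computation immediate. The essential input throughout is strict convexity of $S$: it is what makes the Gauss map a diffeomorphism, so that $G^{-1}$ and $(DG^{-1})_\nu$ exist and the twist operators are invertible.
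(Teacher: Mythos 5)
Your proposal is correct, and for the generating identity it is essentially the paper's own argument: both proofs turn on the single observation that, in differentiating $\mathrm{S}=\langle G^{-1}(\nu),\,n_1-n_2\rangle$ with $\nu=(n_1-n_2)/|n_1-n_2|$, the contribution of $G^{-1}$ vanishes because $dG^{-1}$ takes values in $T_xS=\nu^\perp$ while $n_1-n_2$ is parallel to $\nu$; what remains is $\xi\mapsto\langle x,\xi\rangle$, identified with the foot point $m_i$ by projecting orthogonally along $n_i$. Your packaging of this as the one-form identity $d\mathrm{S}=\langle m_1,dn_1\rangle-\langle m_2,dn_2\rangle$ is only cosmetically different from the paper's pointwise directional-derivative computation. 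Where you genuinely add content is the twist condition, which the paper dismisses with ``analogously one computes \dots\ the second differentials'': you carry out the chain rule, factoring $D_{21}=\pi_{n_1}\circ(DG^{-1})_\nu\circ(\partial\nu/\partial n_2)$, and verify that each factor between $(d-1)$-dimensional spaces is invertible --- $(DG^{-1})_\nu$ is the inverse shape operator (this is exactly where positive principal curvatures enter), $\partial\nu/\partial n_2=-|n_1-n_2|^{-1}\,\pi_\nu|_{n_2^\perp}$ is injective because $\langle\nu,n_2\rangle=(\langle n_1,n_2\rangle-1)/|n_1-n_2|\neq 0$, and $\pi_{n_1}|_{\nu^\perp}$ is injective because $\langle\nu,n_1\rangle=|n_1-n_2|/2>0$; all of these scalar computations check out. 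This factorization makes explicit what the paper leaves implicit, namely precisely where strict convexity is used. One refinement you might still make in the converse direction: the relation $m_1=D_1\mathrm{S}(n_1,n_2)$ says only that $x=G^{-1}(\nu)$ lies on the line $\{m_1+tn_1\}$, which meets $S$ in two points; it is the automatic inequality $\langle n_1,\nu\rangle>0$ that selects the forward intersection point, so the locus defined by the generating relations is the graph of $T$ itself rather than of its time-reversal --- your remark that ``the reflection law holds automatically'' is correct but this orientation check is the reason the right branch is chosen.
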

\begin{remark}
    1. One can check that the formula (\ref{S}) for $\rm S$ coincides with (\ref{s}) in case $d=2$.

    2. In higher dimensions derivatives of the generating function related to the usual Birkhoff coordinates were
    computed  in \cite{B5} and are also very useful.

        \end{remark}
\begin{proof}
    Given two distinct unit vectors $n_1,n_2$, since $G$ is a diffeomorphism, there is a unique point $P$ on the surface $S$ with
    the normal vector equal $n(P) =\frac{n_1-n_2}{|n_1-n_2|} $.
    Then, by the construction, the straight line $l_1$ coming to $P$ in the direction $n_1$ is reflected to a line $l_2$ in the direction $n_2$.
    We need to compute the derivative of
    $$ \left< G^{-1}\left( \frac{n_1-n_2}{|n_1-n_2|}   \right), n_1-n_2 \right>$$
    along tangent vector $\xi\in T_{n_1}\mathbf{S}^{d-1}$. It has two terms, the first when we differentiate $G^{-1}$ vanishes since $P=G^{-1}\left( \frac{n_1-n_2}{|n_1-n_2|} \right )$ varies in the tangent space to $S$ and $n_1-n_2$ is normal to it. Thus the differential $D_1\rm S $
    acts on $\xi$ by the formula
    $$
    D_1\rm S:  \xi\mapsto\left< G^{-1}\left( \frac{n_1-n_2}{|n_1-n_2|}   \right ), \xi  \right>.
    $$

This functional can be identified with the projection of the vector
$G^{-1}\left( \frac{n_1-n_2}{|n_1-n_2|} \right )$ orthogonally along
$n_1$, i.e

$$
D_1\rm S=G^{-1}\left( \frac{n_1-n_2}{|n_1-n_2|} \right )-\left< G^{-1}\left( \frac{n_1-n_2}{|n_1-n_2|}   \right ), n_1  \right>
n_1.
$$
But the last one is precisely $m_1$, since the line $l_1$ goes through $P$.
Analogously on computes $D_2\rm S $ and also the second differentials.\end{proof}

It turns out that the formula (\ref{S}) has been worked out already
for the case of ellipsoids \cite{Su}. In order to see this we shall
compute $\rm S$ more explicitly:
\begin{example}
    Let $S$ be an ellipsoid in $\mathbf{R}^d$,
    $$
    S=\{ \left<A^{-1}x,x\right>=1    \},
    $$
    given by a positive definite symmetric matrix $A$.
    \end{example}
We compute the Gauss map and the supporting function. Given $n$, we need to find $x\in S$ with $n(x)=n$
we have $$\mu n= A^{-1}x,$$ for some $\mu$.
Then we have $$
 \left<A^{-1}x,x\right>=\left<\mu n, \mu An  \right>=1.
$$
So we have $$\mu=\left<An,  n  \right>^{-\frac{1}{2}}.$$ Therefore $$G^{-1}(n)=\left<An,  n  \right>^{-\frac{1}{2}}An$$
And the supporting function:
$$
h(n)=\left<G^{-1}n,n  \right>=\left<An,  n  \right>^{\frac{1}{2}}.
$$
Thus finally we get
\begin{equation}\label{su}
\rm S(n_1,n_2)=  \left< A\left( \frac{n_1-n_2}{|n_1-n_2|}\right),  \frac{n_1-n_2}{|n_1-n_2|}  \right> ^{\frac{1}{2}}|n_1-n_2|=
\end{equation}
$$= \left<A\left( n_1-n_2\right), n_1-n_2 \right> ^{\frac{1}{2}}.
$$
The last formula coincides with one found in \cite{Su} for the ellipsoid.

\end{document}